\documentclass[a4paper,11pt]{amsart}
\usepackage[utf8]{inputenc}
\usepackage[a4paper,margin=3.00truecm]{geometry}
\usepackage{graphics}
\usepackage{amsmath,amssymb,mathrsfs}
\usepackage{latexsym,bm,dsfont}
\usepackage{enumerate,enumitem}
\usepackage{csquotes}
\usepackage{subcaption}
\usepackage{pgfplots,tikz}
\usetikzlibrary{decorations.pathreplacing}
\pgfplotsset{compat=1.10}
\usepgfplotslibrary{fillbetween}
\usetikzlibrary{patterns}
\usetikzlibrary{intersections, calc, angles}
\usepackage{caption}  

\usepackage{hyperref}
\usepackage{cleveref}
\hypersetup{
	linktocpage=true,
	colorlinks=true,
	linkcolor=blue!70!black,
	citecolor=orange!40!red,
	urlcolor=magenta!80!black,
}
\usepackage[showonlyrefs]{mathtools}
\usepackage{etoolbox,kvoptions,logreq,pdftexcmds}

\numberwithin{equation}{section} 

\newtheorem{theorem}{Theorem}[section]
\newtheorem{corollary}[theorem]{Corollary}
\newtheorem{lemma}[theorem]{Lemma}

\theoremstyle{definition}

\newtheorem{remark}[theorem]{Remark}

\newcommand{\R}{\mathbb{R}}	
\newcommand{\dx}{\,\mathrm{d}x}	
\newcommand{\ds}{\,\mathrm{d}S}	
\renewcommand{\d}{\,\mathrm{d}}
\newcommand{\weak}{\rightharpoonup}

\newcommand{\Om}{\Omega}

\newcommand{\norm}[1]{\left\lVert #1 \right\lVert}

\newcommand\restr[1]{\raisebox{-.5ex}{$|$}_{#1}}
\newcommand\intn{- \hspace{-0.42cm} \int}

\DeclareMathOperator{\dive}{\mathrm{div}}

\usepackage{amsaddr}

\author[G.~Buttazzo]{Giuseppe Buttazzo}
\address[G.~Buttazzo]{%
	Università di Pisa, Dipartimento di Matematica,\\
	Largo Bruno Pontecorvo 5, 56127 Pisa, Italy}
\email[G.~Buttazzo]{giuseppe.buttazzo@unipi.it}

\author[R.~Ognibene]{Roberto Ognibene}
\address[R.~Ognibene]{%
	Università degli Studi di Milano-Bicocca,\\
	Dipartimento di Matematica e Applicazioni \\
	Via Cozzi 55, 20125 Milano, Italy}
\email[R.~Ognibene]{roberto.ognibene@unimib.it}

\title{Asymptotics of nonlinear Robin energies}

\setcounter{tocdepth}{1}

\begin{document}
	
	\begin{abstract}
		This paper investigates the asymptotic behavior of a class of nonlinear variational problems with Robin-type boundary conditions on a bounded Lipschitz domain. The energy functional contains a bulk term (the $p$-norm of the gradient), a boundary term (the $q$-norm of the trace) scaled by a parameter $\alpha>0$, and a linear source term. By variational methods, we derive   first-order expansions of the minimum as $\alpha\to 0^+$ (Neumann limit) and as $\alpha\to+\infty$ (Dirichlet limit). In the Dirichlet limit, the energy converges to the one of Dirichlet problem with a power-type quantified rate (depending only on $q$), while the Neumann limit exhibits a dichotomy: under a compatibility condition, the energy linearly approaches the one of Neumann problem, otherwise, it diverges as a power of $\alpha$ depending only on $q$. 
	\end{abstract}
	
\maketitle

\tableofcontents

\textbf{Keywords: }Robin boundary condition, $p$-Laplacian, asymptotical analysis, nonlinear elliptic PDEs.

\medskip

\textbf{2020 Mathematics Subject Classification: }35J20, 35J25, 35J66, 35J92, 49J45.

\section{Introduction}\label{sec:intro}

Let $\Om$ be a bounded open and Lipschitz subset of $\R^d$. In this work, we study a class of nonlinear variational problems involving Robin-type boundary conditions. Specifically, we consider the energy functional
\begin{equation}\label{eq:E_a}
E_\alpha=E_\alpha^{f,p,q}:=\inf\left\{\frac1p\int_\Om |\nabla u|^p\dx+\frac{\alpha}{q}\int_{\partial\Om}|u|^q\ds-\int_\Om fu\dx\colon u\in W^{1,p}(\Om)\right\},
\end{equation}
where $\ds$ denotes the $d-1$ dimensional Hausdorff measure on the boundary $\partial\Om$, and $\alpha>0$ is a real parameter. Our main goal is to investigate the asymptotic behavior of $E_\alpha$ as $\alpha\to0^+$ and as $\alpha\to+\infty$, revealing different regimes of interaction between the bulk and boundary terms in the energy.

The parameters $p>1$ and $q>1$ govern the nonlinear character of the problem, both in the interior and on the boundary. The source term $f$ is assumed to belong to the dual space $\big(W^{1,p}(\Om)\big)^*$, so that the integral
$$\int_\Om fu\dx$$
is interpreted as the duality pairing between $f$ and $u$ in the Sobolev space $W^{1,p}(\Om)$. In the following, with a little abuse of notation, we write $\int_\Om f\dx$ instead of $\langle f,1\rangle$.

We emphasize that our analysis is restricted to the case $\alpha>0$, as the behavior for $\alpha<0$ is degenerate. Indeed, in this case the energy $E_\alpha=-\infty$, which can be seen by testing the minimization problem with sequences of constant functions diverging to infinity. This trivializes the minimization problem and motivates our focus on positive values of the parameter $\alpha$.

By standard variational arguments, one can readily show the existence of a unique minimizer for the energy $E_\alpha$. More precisely, for each $\alpha>0$, there exists a unique function $u_\alpha\in W^{1,p}(\Om)\cap L^q(\partial\Om)$ such that $E_\alpha$ is attained at $u_\alpha$. This minimizer satisfies the following nonlinear boundary value problem in the weak sense:
$$\begin{cases}
-\Delta_p u_\alpha=f&\text{in }\Om\\
|\nabla u_\alpha|^{p-2}\partial_\nu u_\alpha+\alpha|u_\alpha|^{q-2}u_\alpha=0&\text{on }\partial\Om,
\end{cases}$$
where $\Delta_p u:=\dive\big(|\nabla u|^{p-2}\nabla u\big)$ denotes the $p$-Laplace operator and $\partial_\nu u$ is the outer normal derivative on $\partial\Om$. The weak formulation of this PDE reads as
$$\int_\Om|\nabla u_\alpha|^{p-2}\nabla u_\alpha\cdot\nabla\varphi\dx+\alpha\int_{\partial\Om}|u_\alpha|^{q-2}u_\alpha \varphi\ds=\int_\Om f\varphi\ds\qquad\text{for all }\varphi\in W^{1,p}(\Om),$$
see also \Cref{lemma:minimizer}.

A natural question concerns the asymptotic behavior of $E_\alpha$ as $\alpha$ varies. As $\alpha\to+\infty$, the boundary term becomes increasingly dominant, enforcing a stronger penalization on the boundary values of $u_\alpha$. One then expects that $u_\alpha$ converges to the solution of the associated Dirichlet problem, and that
$$E_\alpha\to E_\infty:=\inf\bigg\{\frac{1}{p}\int_\Om|\nabla u|^p\dx-\int_\Om fu\dx\ :\ u\in W^{1,p}_0(\Om)\bigg\},$$
as $\alpha\to+\infty$, where $W^{1,p}_0(\Om)$ denotes the Sobolev space of functions vanishing on $\partial\Om$. The unique minimizer $u_\infty\in W^{1,p}_0(\Om)$ solves the Dirichlet problem
$$\begin{cases}
-\Delta_p u_\infty=f&\text{in }\Om\\
u_\infty=0&\text{on }\partial\Om.
\end{cases}$$

On the opposite end, as $\alpha\to0^+$, the influence of the boundary term diminishes. Under the compatibility condition
\begin{equation}\label{eq:compat}
\int_\Om f\dx=0,
\end{equation}
the energy $E_\alpha$ converges to the Neumann energy
$$E_0:=\inf\left\{\frac{1}{p}\int_\Om|\nabla u|^p\dx-\int_\Om fu\dx\ :\ u\in W^{1,p}(\Om)\right\},$$
and minimizers $u_\alpha$ converge to a function $u_0\in W^{1,p}(\Om)$ satisfying the Neumann boundary value problem
$$\begin{cases}
-\Delta_p u_0=f&\text{in }\Om\\
|\nabla u_0|^{p-2}\partial_\nu u_0=0&\text{on }\partial\Om.
\end{cases}$$
To ensure uniqueness of the Neumann solution $u_0$, one typically imposes a normalization condition, such as
$$\int_{\partial\Om}|u_0|^{q-2}u_0\ds=0.$$
see \Cref{sec:neumann} for further discussion.

If the compatibility condition \eqref{eq:compat} is not satisfied, the situation changes drastically: in this case, the minimization problem becomes unbounded from below as $\alpha\to0^+$, that is $E_\alpha\to-\infty$. This can be seen by testing the energy functional \eqref{eq:E_a} with the constant function
$$u\equiv\left(\int_\Om f\dx\right)^{-1}\alpha^{-1/q}$$
which yields the estimate
$$E_\alpha\le\frac{\mathcal{H}^{d-1}(\partial\Om)}{\displaystyle q\left|\int_\Om f\dx\right|^q}-\alpha^{-1/q}\to-\infty\qquad\text{as }\alpha\to 0^+.$$
This divergence reflects the incompatibility of the Neumann condition with a nonzero mean source term, highlighting the delicate interplay between the geometry of $\Om$, the boundary behavior, and the structure of the energy functional.

The main objective of this paper is to derive the   first-order asymptotic expansion of the energy $E_\alpha$ in the two limiting regimes: as $\alpha\to0^+$, which we refer to as the \emph{Neumann limit}, and as $\alpha\to+\infty$, denoted as the \emph{Dirichlet limit}. These asymptotic expansions offer a precise quantitative description of how the Robin-type energy approaches its Neumann or Dirichlet counterparts, depending on the strength of the boundary penalization.

To this end, we begin in \Cref{sec:basic} by establishing some basic properties of the map $\alpha\mapsto E_\alpha$ and of the associated minimizer $u_\alpha$. In particular, we prove existence and uniqueness of a minimizer, its continuity with respect to $\alpha$ and differentiability of the energy $E_\alpha$, together with the explicit expression
\begin{equation*}
	\frac{\d}{\d\alpha}\,E_\alpha=\frac{1}{q}\int_{\partial\Om}|u_\alpha|^q\ds.
\end{equation*}
We then move to the analysis of the Dirichlet and Neumann limits in detail in \Cref{sec:dirichlet} and \Cref{sec:neumann}, respectively.

In the Dirichlet limit regime, as $\alpha\to+\infty$, we show that the boundary term forces the vanishing of the minimizer on $\partial\Om$, and we obtain the following asymptotic expansion
$$E_\alpha=E_\infty-\alpha^{-\frac{1}{q-1}}\,\frac{q-1}{q}\int_{\partial\Om}|\partial_\nu u_\infty|^{\frac{(p-1)q}{q-1}}\ds+o\big(\alpha^{-\frac{1}{q-1}}\big)\qquad\text{as }\alpha\to+\infty,$$
where $u_\infty\in W_0^{1,p}(\Om)$ denotes the unique solution to the Dirichlet problem associated with the energy $E_\infty$.

In contrast, the Neumann limit requires a case-by-case analysis, depending on whether the compatibility condition $\int_\Om f\dx=0$ is satisfied. When this condition holds, we prove that the Robin energy $E_\alpha$ linearly approaches the Neumann counterpart $E_0$, that is
\begin{equation}\label{eq:neumann_th1}
E_\alpha=E_0+\frac{\alpha}{q}\int_{\partial\Om}|u_0|^q\ds+o(\alpha),\qquad\text{as }\alpha\to 0^+,
\end{equation}
where $u_0\in W^{1,p}(\Om)$ is the unique (properly normalized) solution of the Neumann problem associated with $E_0$.

On the other hand, when the compatibility condition is violated, that is $\int_\Om f\dx\ne0$, the behavior of $E_\alpha$ changes drastically. In this case, the energy $E_\alpha$ diverges to $-\infty$ with the following rate
\begin{equation}\label{eq:neumann_th2}
E_\alpha=-\alpha^{-\frac{1}{q-1}}\,\frac{q-1}{q}\frac{\displaystyle\left|\int_\Om f\dx\right|^{\frac{q}{q-1}}}{\mathcal{H}^{d-1}(\partial\Om)^{\frac{1}{q-1}}}+o\left(\alpha^{-\frac{1}{q-1}}\right),\qquad\text{as }\alpha\to0^+.
\end{equation}
We emphasize that this paper is self-contained: all results are proved relying only on elementary variational techniques. The arguments are based on upper and lower energy estimates, obtained either by careful construction of test functions or by exploiting suitable convexity inequalities.

\vspace{0.2cm}

Finally, let us make a brief overview of the state of the art concerning the problem we address. To the best of our knowledge, the   asymptotic behavior of Robin-type energies as the parameter $\alpha$ tends to $0$ or $+\infty$ has been previously investigated only in the linear case $p=q=2$. In particular, among others, we mention \cite{BD} and \cite{BBBT} (and references therein) for a functional-analytic approach to the study of the convergence of resolvents as $\alpha\to+\infty$; we also cite \cite{AR} for the case of exterior three-dimensional domains.
On the other hand, the asymptotic behavior of eigenvalues of the $p$-Laplacian with Robin boundary conditions has received more attention. More precisely, in the linear case $p=q=2$, the limit as $\alpha\to 0$ was studied in \cite[Section 4.1]{BS}, while the limit as $\alpha\to +\infty$ was addressed in \cite[Section 5]{BBBT} and \cite{O24}. We also point out that in \cite{O24}, the author investigates the asymptotic behavior of a variant of the Robin torsional rigidity, in which the forcing term $f$ acts on the boundary $\partial\Om$ rather than in the interior of $\Om$, see \cite[Lemma 3.4]{O24}.
Regarding the first eigenvalue in the nonlinear case $p\neq 2$, to the best of our knowledge, first-order expansions in terms of $\alpha$ have only been established in the range $\alpha\to-\infty$ (see \cite{KP, P}), which is not meaningful in our framework.

\section{Basic properties}\label{sec:basic}

In this section, we gather some fundamental properties of the function $\alpha\mapsto E_\alpha$, which will play a central role in the subsequent analysis.

\begin{lemma}
The map $\alpha\mapsto E_\alpha$ is concave and upper semicontinuous on the interval $(0,+\infty)$.
\end{lemma}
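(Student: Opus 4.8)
The plan is to exploit the fact that, for each fixed admissible competitor $u$, the quantity being minimized in \eqref{eq:E_a} is an \emph{affine} function of the parameter $\alpha$. Concretely, writing
\begin{equation*}
J_\alpha(u):=\frac1p\int_\Om|\nabla u|^p\dx+\frac{\alpha}{q}\int_{\partial\Om}|u|^q\ds-\int_\Om fu\dx=A(u)+\alpha\,B(u),
\end{equation*}
where $A(u):=\frac1p\int_\Om|\nabla u|^p\dx-\int_\Om fu\dx$ and $B(u):=\frac1q\int_{\partial\Om}|u|^q\ds\ge0$ are both independent of $\alpha$, we see that $E_\alpha=\inf_{u\in W^{1,p}(\Om)}J_\alpha(u)$ is a pointwise infimum of a family of affine functions of $\alpha$. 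Both desired properties follow from this single structural observation.

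First I would establish concavity. Since every affine function is concave and the pointwise infimum of an arbitrary family of concave functions is concave, the claim is immediate. If one prefers a direct verification, fix $\alpha_0,\alpha_1\in(0,+\infty)$ and $t\in[0,1]$, and set $\alpha_t:=(1-t)\alpha_0+t\alpha_1$. For every competitor $u$ one has $J_{\alpha_t}(u)=(1-t)J_{\alpha_0}(u)+tJ_{\alpha_1}(u)\ge(1-t)E_{\alpha_0}+tE_{\alpha_1}$; taking the infimum over $u$ on the left-hand side yields $E_{\alpha_t}\ge(1-t)E_{\alpha_0}+tE_{\alpha_1}$, which is exactly concavity.

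Then I would address upper semicontinuity. Each map $\alpha\mapsto J_\alpha(u)$ is affine, hence continuous, and the pointwise infimum of any family of continuous (a fortiori upper semicontinuous) functions is itself upper semicontinuous; this gives the conclusion directly. Alternatively, one may invoke the classical fact that a real-valued concave function on an open interval is automatically continuous: combined with the concavity just proved and with the finiteness of $E_\alpha$ for $\alpha>0$ (guaranteed by the existence of a minimizer, see \Cref{lemma:minimizer}), this again yields continuity, and in particular upper semicontinuity, on $(0,+\infty)$.

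I do not expect any genuine obstacle here: the statement is essentially the standard principle that an infimum of affine functions is concave and upper semicontinuous. The only points requiring a little care are, first, to verify that the two terms $A(u)$ and $B(u)$ are indeed $\alpha$-independent, which is clear from \eqref{eq:E_a} since the boundary term is the unique carrier of the parameter, and, second, to interpret $\int_\Om fu\dx$ as the duality pairing $\langle f,u\rangle$ so that $A(u)$ is well defined for $u\in W^{1,p}(\Om)$. Neither affects the linearity in $\alpha$, so the argument goes through unchanged.
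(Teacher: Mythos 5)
Your proposal is correct and takes essentially the same route as the paper: the paper's proof is precisely the observation that $E_\alpha$ is a pointwise infimum of functions affine in $\alpha$, hence concave and upper semicontinuous. Your explicit decomposition $J_\alpha(u)=A(u)+\alpha B(u)$, the direct verification of concavity, and the alternative argument via continuity of finite concave functions simply fill in details the paper leaves implicit.
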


\begin{proof}
This assertion follows directly from the observation that the quantity $E_\alpha$ can be expressed as the infimum over a family of affine functions in the parameter $\alpha$. As is well known, the pointwise infimum of affine functions is concave and upper semicontinuous, which yields the desired conclusion.
\end{proof}

An immediate consequence of this property is the following regularity result.

\begin{corollary}\label{cor:lip}
The function $\alpha\mapsto E_\alpha$ is locally Lipschitz continuous on the interval $(0,+\infty)$.
\end{corollary}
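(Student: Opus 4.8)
The plan is to exploit the concavity established in the previous lemma, together with the elementary fact that a finite concave function on an open interval is automatically locally Lipschitz. The only structural input needed beyond concavity is that $E_\alpha$ is finite for every $\alpha>0$, which follows from the existence of a finite-energy minimizer recorded in the introduction (and in \Cref{lemma:minimizer}); hence $\alpha\mapsto E_\alpha$ is a real-valued concave function on $(0,+\infty)$, and the difference quotients appearing below are all well defined. I note in passing that the upper semicontinuity from the lemma is not actually required for this corollary, since concavity on an open interval already forces continuity.

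The key mechanism is the monotonicity of secant slopes for concave functions: for concave $E$, the quotient $(E_y-E_x)/(y-x)$ does not increase when the interval $[x,y]$ is shifted to the right (equivalently, when either endpoint is increased). I would fix an arbitrary compact subinterval $[a,b]\subset(0,+\infty)$ and choose auxiliary anchor points $a',b'$ with $0<a'<a\le b<b'$. Then, for any $x<y$ in $[a,b]$, passing from $[a',a]$ to $[x,y]$ increases both endpoints, and passing from $[x,y]$ to $[b,b']$ again increases both endpoints, so the slope monotonicity yields the chain of inequalities
$$\frac{E_a-E_{a'}}{a-a'}\ \ge\ \frac{E_y-E_x}{y-x}\ \ge\ \frac{E_{b'}-E_b}{b'-b}.$$

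Setting $L:=\max\left\{\left|\tfrac{E_a-E_{a'}}{a-a'}\right|,\ \left|\tfrac{E_{b'}-E_b}{b'-b}\right|\right\}$, the difference quotient $(E_y-E_x)/(y-x)$ lies in the bounded interval delimited above, so $|E_y-E_x|\le L\,|y-x|$ for all $x,y\in[a,b]$. This is Lipschitz continuity on $[a,b]$ with a constant depending only on the four anchor values $E_{a'},E_a,E_b,E_{b'}$, and since $[a,b]$ was an arbitrary compact subinterval, local Lipschitz continuity on $(0,+\infty)$ follows. I do not expect any genuine obstacle here: the entire content is the slope-monotonicity inequality, which is merely the defining property of concavity recast in terms of difference quotients; the only point deserving a word of care is the finiteness of $E_\alpha$, which guarantees that these quotients make sense and which is immediate from the existence of minimizers.
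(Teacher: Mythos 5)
Your proof is correct and follows exactly the route the paper intends: the paper derives the corollary as an ``immediate consequence'' of the concavity lemma, and what you have done is spell out the standard fact behind that remark (finite concave functions on open intervals are locally Lipschitz, via monotonicity of secant slopes). Your explicit attention to the finiteness of $E_\alpha$ is a welcome touch, since that hypothesis is genuinely needed and the paper leaves it implicit.
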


We now turn our attention to the fundamental question of the existence of minimizers for the energy functional associated with our problem. To this end, we introduce the functional
$$J_\alpha(u):=\frac{1}{p}\int_\Om |\nabla u|^p\dx+\frac{\alpha}{q}\int_{\partial\Om}|u|^q\ds-\int_\Om fu\dx$$
defined for functions $u\in W^{1,p}(\Omega)$. It is immediate to verify that the energy level $E_\alpha$ can be characterized variationally as
$$E_\alpha=\inf\left\{J_\alpha(u)\colon u\in \mathcal{W}_{p,q}\right\},$$
where
$$\mathcal{W}_{p,q}:=\left\{u\in W^{1,p}(\Om)\ :\ u\restr{\partial\Om}\in L^q(\partial\Om)\right\}$$
and $u\restr{\partial\Om}$ denotes the trace of $u$, understood in the sense of the trace operator from $W^{1,p}(\Om)$ to $W^{1-\frac{1}{p},p}(\partial\Om)$.

As a first step towards establishing the existence of minimizers, we begin with a classical but essential result concerning the coercivity of the functional $J_\alpha$.

\begin{lemma}[Coercivity]\label{lemma:poinc}
There exists a constant $C=C(d,\Om,p,q)>0$ such that the following inequality holds:
\begin{equation}\label{eq:poinc}
\|u\|_{L^p(\Om)}\le C\left(\|\nabla u\|_{L^p(\Om)}+\|u\|_{L^q(\partial\Om)}\right)
\end{equation}
for all $u\in W^{1,p}(\Om)$. Consequently, the functional $J_\alpha$ is coercive on $W^{1,p}(\Om)$ for every fixed $\alpha>0$, i.e.,
$$\lim_{\norm{u}_{W^{1,p}(\Om)}\to+\infty}J_\alpha(u)=+\infty.$$
\end{lemma}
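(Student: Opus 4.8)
The plan is to establish the Poincaré-type inequality \eqref{eq:poinc} first, and then to deduce the coercivity of $J_\alpha$ as a fairly routine consequence. For the inequality I would argue by contradiction, exploiting the compactness of the embedding $W^{1,p}(\Om)\hookrightarrow L^p(\Om)$ on the bounded Lipschitz domain $\Om$. If \eqref{eq:poinc} failed for every constant, there would be a sequence $(u_n)\subset W^{1,p}(\Om)$ with $\|u_n\|_{L^p(\Om)}=1$ and $\|\nabla u_n\|_{L^p(\Om)}+\|u_n\|_{L^q(\partial\Om)}\to0$. The normalization makes $(u_n)$ bounded in $W^{1,p}(\Om)$, so by reflexivity (recall $p>1$) a subsequence satisfies $u_n\weak v$ in $W^{1,p}(\Om)$, and by Rellich--Kondrachov the convergence is strong in $L^p(\Om)$; in particular $\|v\|_{L^p(\Om)}=1$.

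The next step is to identify $v$ and reach a contradiction. From $u_n\weak v$ we get $\nabla u_n\weak\nabla v$ in $L^p(\Om)$, while $\|\nabla u_n\|_{L^p(\Om)}\to0$ says $\nabla u_n\to0$ strongly; uniqueness of weak limits forces $\nabla v=0$, so $v$ is constant on each connected component of $\Om$. To conclude that this constant vanishes, the boundary term must enter. Since the trace operator is bounded, hence weakly continuous, from $W^{1,p}(\Om)$ into $L^s(\partial\Om)$ for a suitable exponent $s$, one has $u_n\restr{\partial\Om}\weak v\restr{\partial\Om}$ there; on the other hand $\|u_n\restr{\partial\Om}\|_{L^q(\partial\Om)}\to0$, and because $\partial\Om$ has finite measure these two facts force $v\restr{\partial\Om}=0$. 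As the trace of a locally constant function equals that function on the boundary, every component-constant must be zero, so $v\equiv0$, contradicting $\|v\|_{L^p(\Om)}=1$. This proves \eqref{eq:poinc}.

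For the coercivity I would combine \eqref{eq:poinc} with Young's inequality. The inequality gives $\|u\|_{W^{1,p}(\Om)}\le(C+1)\|\nabla u\|_{L^p(\Om)}+C\|u\|_{L^q(\partial\Om)}$, whence the linear term obeys $|\langle f,u\rangle|\le\|f\|_*\big((C+1)\|\nabla u\|_{L^p(\Om)}+C\|u\|_{L^q(\partial\Om)}\big)$. Since $p,q>1$, Young's inequality lets me absorb each summand into a fraction of $\tfrac1p\|\nabla u\|_{L^p(\Om)}^p$ and of $\tfrac{\alpha}{q}\|u\|_{L^q(\partial\Om)}^q$, respectively, at the cost of an additive constant depending on $\|f\|_*,p,q,\alpha,C$. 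This yields a lower bound $J_\alpha(u)\ge c_1\|\nabla u\|_{L^p(\Om)}^p+c_2\|u\|_{L^q(\partial\Om)}^q-C'$ with $c_1,c_2>0$. Finally, \eqref{eq:poinc} again shows that $\|u\|_{W^{1,p}(\Om)}\to+\infty$ forces $\|\nabla u\|_{L^p(\Om)}+\|u\|_{L^q(\partial\Om)}\to+\infty$, so at least one of the two power terms diverges and coercivity follows.

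The main obstacle is the step identifying the weak limit $v$ as zero: the gradient part alone only yields that $v$ is constant, and one must carefully pass to the limit in the boundary term despite the mismatch between the interior exponent $p$ and the boundary exponent $q$. This is handled by using the weak continuity of the bounded trace operator together with the finiteness of $\mathcal H^{d-1}(\partial\Om)$ to reconcile the weak $L^s$-convergence of the traces with their strong $L^q$-convergence to zero.
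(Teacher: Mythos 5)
Your proof is correct and follows essentially the same route as the paper: a compactness--contradiction argument for the inequality \eqref{eq:poinc}, followed by absorption of the linear term $\int_\Om fu\dx$ using \eqref{eq:poinc} and the superlinearity of the exponents $p,q>1$. The only (harmless) variations are that you identify the vanishing of the trace of the limit via weak continuity of the bounded trace operator and the finite measure of $\partial\Om$, where the paper invokes the compact trace embedding into $L^p(\partial\Om)$, and that your coercivity step is a direct Young-inequality estimate rather than the paper's contradiction argument.
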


\begin{proof}
We proceed by contradiction. Suppose that inequality \eqref{eq:poinc} does not hold. Then, there exists a sequence $\{u_n\}_n\subset W^{1,p}(\Om)$ such that
$$\|u\|_{L^p(\Om)}=1\qquad\text{and}\qquad\|\nabla u\|_{L^p(\Om)}+\|u\|_{L^q(\partial\Om)}\to0\quad\text{as }n\to\infty.$$
This implies that, as $n\to\infty$,
$$\begin{cases}
\nabla u_n\to0&\text{strongly in }L^p(\Om),\\
u_n\restr{\partial\Om}\to0&\text{strongly in }L^q(\partial\Om).
\end{cases}$$
Since $\{u_n\}_n$ is bounded in $W^{1,p}(\Om)$, by reflexivity and compact embeddings, there exists a function $\tilde{u}\in W^{1,p}(\Om)$ and a subsequence (not relabeled) such that
$$\begin{cases}
u_n\rightharpoonup\tilde{u}&\text{weakly in }W^{1,p}(\Om),\\
u_n\to\tilde{u}&\text{strongly in }L^p(\Om),\\
u_n\to\tilde{u}&\text{strongly in }L^p(\partial\Om),
\end{cases}$$
as $n\to\infty$. From the strong convergence of the gradients and boundary traces to zero, it follows that $\nabla\tilde{u}=0$ and $\tilde{u}|_{\partial\Om}=0$, hence $\tilde{u}=0$ in $\Om$. On the other hand, the normalization condition $\|u\|_{L^p(\Om)}=1$ and strong convergence in $L^p(\Om)$ yield $\|\tilde{u}\|_{L^p(\Om)}=1$, a contradiction. Thus, \eqref{eq:poinc} must hold.

In order to prove the coercivity of $J_\alpha$, assume by contradiction that for a sequence $\{u_n\}_n$ in $W^{1,p}(\Om)$ we have
$$\norm{u_n}_{W^{1,p}(\Om)}\to+\infty\qquad\text{and}\qquad J_\alpha(u_n)\le C$$
for some constant $C$. Then, combining the estimate
\begin{equation*}
	\frac{1}{p}\int_\Omega |\nabla u_n|^p\dx+\frac{\alpha}{q}\int_{\partial\Omega}|u_n|^q=J_\alpha(u_n)+\int_\Omega fu\dx\leq C+\norm{f}_{(W^{1,p}(\Omega))^*}\norm{u}_{W^{1,p}(\Omega)}
\end{equation*}
with \eqref{eq:poinc} we obtain that both $\norm{u_n}_{L^p(\Om)}$ and $\norm{\nabla u_n}_{L^p(\Om)}$ are bounded, in contradiction with the assumption $\norm{u_n}_{W^{1,p}(\Om)}\to+\infty$.
\end{proof}

We are now ready to state the existence and uniqueness result for minimizers of $E_\alpha$.

\begin{lemma}[Existence and uniqueness of a minimizer]\label{lemma:minimizer}
	For any $\alpha>0$, there exists a unique function $u_\alpha\in W^{1,p}(\Om)$, such that $u_\alpha\restr{\partial\Om}\in L^q(\partial\Om)$,
	achieving $E_\alpha$ and satisfying
	\begin{equation*}
		\begin{cases}
			-\Delta_p u_\alpha=f, &\text{in }\Om, \\
			|\nabla u_\alpha|^{p-2}\partial_{\nu}u_\alpha+\alpha |u_\alpha|^{q-2}u_\alpha =0, &\text{on }\partial\Om,
		\end{cases}
	\end{equation*}
	in a weak sense, that is
	\begin{equation*}
		\int_\Om|\nabla u_\alpha|^{p-2}\nabla u_\alpha\cdot\nabla \varphi\dx+\alpha\int_{\partial\Om}|u_\alpha|^{q-2}u_\alpha\varphi\ds=\int_\Om f\varphi\dx\quad\text{for all }\varphi\in W^{1,p}(\Om).
	\end{equation*}
\end{lemma}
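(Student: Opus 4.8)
The plan is to apply the direct method of the calculus of variations to the functional $J_\alpha$ on the admissible class $\mathcal{W}_{p,q}$, and then to recover the nonlinear boundary value problem by a first-variation argument. First I would fix a minimizing sequence $\{u_n\}_n\subset\mathcal{W}_{p,q}$, so that $J_\alpha(u_n)\to E_\alpha$. By \Cref{lemma:poinc} the functional is coercive, hence $\{u_n\}_n$ is bounded in $W^{1,p}(\Om)$; moreover, since $J_\alpha(u_n)$ is bounded and the gradient and linear terms are controlled (again via \eqref{eq:poinc}), the boundary integrals $\int_{\partial\Om}|u_n|^q\ds$ stay bounded, i.e.\ the traces $u_n\restr{\partial\Om}$ are bounded in $L^q(\partial\Om)$. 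By reflexivity of $W^{1,p}(\Om)$ and of $L^q(\partial\Om)$ (recall $p,q>1$) I extract a subsequence (not relabeled) with $u_n\weak u_\alpha$ weakly in $W^{1,p}(\Om)$ and $u_n\restr{\partial\Om}\weak g$ weakly in $L^q(\partial\Om)$. The compact trace embedding $W^{1,p}(\Om)\hookrightarrow L^p(\partial\Om)$ (already used in \Cref{lemma:poinc}) gives $u_n\restr{\partial\Om}\to u_\alpha\restr{\partial\Om}$ strongly, hence, along a further subsequence, a.e.\ on $\partial\Om$; comparing this with the weak $L^q$ limit forces $g=u_\alpha\restr{\partial\Om}$, so in particular $u_\alpha\restr{\partial\Om}\in L^q(\partial\Om)$ and $u_\alpha\in\mathcal{W}_{p,q}$.

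Next I would establish weak lower semicontinuity of $J_\alpha$. The gradient term $u\mapsto\frac1p\int_\Om|\nabla u|^p\dx$ is convex and strongly continuous, hence weakly lower semicontinuous on $W^{1,p}(\Om)$; the boundary term is handled by the weak lower semicontinuity of $v\mapsto\int_{\partial\Om}|v|^q\ds$ on $L^q(\partial\Om)$, yielding $\int_{\partial\Om}|u_\alpha|^q\ds\le\liminf_n\int_{\partial\Om}|u_n|^q\ds$; finally the linear term is weakly continuous since $f\in\big(W^{1,p}(\Om)\big)^*$ and $u_n\weak u_\alpha$. Combining these gives $J_\alpha(u_\alpha)\le\liminf_n J_\alpha(u_n)=E_\alpha$, so $u_\alpha$ attains the infimum. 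The main delicate point of the whole argument is precisely the lower semicontinuity of the boundary term: when $q$ exceeds the critical trace exponent the traces need not converge strongly in $L^q(\partial\Om)$, and the correct way to pass to the limit is through the weak $L^q$ compactness of the traces together with the identification of the limit supplied by the compact embedding into $L^p(\partial\Om)$.

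For uniqueness I would exploit strict convexity. If $u_1,u_2\in\mathcal{W}_{p,q}$ were two minimizers, then convexity of all three terms gives $J_\alpha\big(\tfrac{u_1+u_2}{2}\big)\le\tfrac12 J_\alpha(u_1)+\tfrac12 J_\alpha(u_2)=E_\alpha$, which, being an infimum, forces equality throughout. Strict convexity of $\xi\mapsto|\xi|^p$ on $\R^d$ then forces $\nabla u_1=\nabla u_2$ a.e.\ in $\Om$, while strict convexity of $t\mapsto|t|^q$ forces $u_1=u_2$ a.e.\ on $\partial\Om$. Since $u_1-u_2$ has vanishing gradient it is constant on each connected component of $\Om$, and since every such component is bounded with boundary contained in $\partial\Om$, where the two traces agree, that constant must vanish; hence $u_1=u_2$.

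It remains to derive the weak Euler--Lagrange equation. For $\varphi\in\mathcal{W}_{p,q}$ the map $t\mapsto J_\alpha(u_\alpha+t\varphi)$ is finite and convex, hence differentiable at $t=0$, and minimality gives $\frac{\d}{\d t}J_\alpha(u_\alpha+t\varphi)\big|_{t=0}=0$. I would justify differentiation under the integral sign by dominated convergence, using that $\xi\mapsto|\xi|^p$ and $t\mapsto|t|^q$ are $C^1$ and bounding the difference quotients through the mean value theorem and Hölder's inequality, the dominating functions being integrable precisely because $u_\alpha,\varphi\in\mathcal{W}_{p,q}$. Computing the three derivatives yields
$$\int_\Om|\nabla u_\alpha|^{p-2}\nabla u_\alpha\cdot\nabla\varphi\dx+\alpha\int_{\partial\Om}|u_\alpha|^{q-2}u_\alpha\varphi\ds-\int_\Om f\varphi\dx=0,$$
which is exactly the stated weak formulation; here the natural test class is $\mathcal{W}_{p,q}$, which coincides with $W^{1,p}(\Om)$ whenever $q$ lies below the critical trace exponent (so that every $W^{1,p}(\Om)$ function has trace in $L^q(\partial\Om)$).
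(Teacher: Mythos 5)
Your proof is correct and follows essentially the same route as the paper's: the direct method (coercivity from \Cref{lemma:poinc} plus weak lower semicontinuity of each term) for existence, strict convexity for uniqueness, and a first-variation computation for the weak Euler--Lagrange equation. You in fact fill in details the paper leaves implicit, notably the weak $L^q(\partial\Om)$ compactness of the traces (with identification of the limit via the compact embedding into $L^p(\partial\Om)$), which is exactly what is needed when $q$ exceeds the critical trace exponent, and the observation that the natural test class for the weak formulation is $\mathcal{W}_{p,q}$.
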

\begin{proof}
	The proof is classical and based on the direct method of the calculus of variations. Indeed, by \Cref{lemma:poinc}, the functional $J_\alpha$ is coercive; moreover, we notice that $J_\alpha$ is weakly lower semicontinuous since the $L^p(\Om)$-norm of the gradient and the $L^q(\partial\Om)$-norm enjoy this property and the last duality term in $J_\alpha$ is linear and bounded by assumption on $f$. Therefore, there exists a minimizer $u_\alpha$. Finally, we observe that the functional is strictly convex (being defined as the sum of two strictly convex terms and a linear one), hence the minimizer is unique.
\end{proof}

We now move our focus to differentiability properties (with respect to $\alpha$) of the minimum $E_\alpha$. We have the following.
\begin{lemma}\label{lemma:diff}
	The map $\alpha\mapsto u_\alpha$ is continuous from $(0,+\infty)$ to $W^{1,p}(\Om)$ and the map
	\begin{equation*}
		\alpha\mapsto \int_{\partial\Om}|u_\alpha|^q\ds
	\end{equation*}
	is continuous on $(0,+\infty)$.	Moreover, the map $\alpha\mapsto E_\alpha$ is of class $C^1$ in $(0,+\infty)$ and there holds
	\begin{equation*}
		\frac{\d}{\d\alpha}\,E_\alpha=\frac{1}{q}\int_{\partial\Om}|u_\alpha|^q\ds.
	\end{equation*}
\end{lemma}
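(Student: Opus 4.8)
The plan is to prove the three assertions in the order stated, deriving the derivative formula from a difference-quotient squeeze built on comparing the energies at different parameters. The continuity of the minimizer will come from the direct method together with a uniform-convexity upgrade from weak to strong convergence; the continuity of $\alpha\mapsto\int_{\partial\Om}|u_\alpha|^q\ds$ will fall out of the same energy-splitting; and the $C^1$ regularity will follow once the one-sided derivatives are squeezed between two quantities that both converge, by that continuity, to $\frac1q\int_{\partial\Om}|u_\alpha|^q\ds$.

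For the continuity of $\alpha\mapsto u_\alpha$, I would fix $\alpha_0>0$ and a sequence $\alpha_n\to\alpha_0$, which I may assume to lie in $[\alpha_0/2,2\alpha_0]$. Testing the minimality of $u_{\alpha_n}$ against the zero function gives $J_{\alpha_n}(u_{\alpha_n})\le0$, whence
$$\frac1p\int_\Om|\nabla u_{\alpha_n}|^p\dx+\frac{\alpha_n}{q}\int_{\partial\Om}|u_{\alpha_n}|^q\ds\le\norm{f}_{(W^{1,p}(\Om))^*}\norm{u_{\alpha_n}}_{W^{1,p}(\Om)};$$
combined with the coercivity inequality \eqref{eq:poinc} this yields a uniform bound on $\norm{u_{\alpha_n}}_{W^{1,p}(\Om)}$ and on $\norm{u_{\alpha_n}}_{L^q(\partial\Om)}$. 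By reflexivity and compactness of the trace embedding I extract a (not relabeled) subsequence with $u_{\alpha_n}\weak\bar u$ in $W^{1,p}(\Om)$, strongly in $L^p(\Om)$, and with traces converging strongly in $L^p(\partial\Om)$, hence a.e.\ on $\partial\Om$. To identify $\bar u$, I use weak lower semicontinuity of $u\mapsto\int_\Om|\nabla u|^p\dx$ and, via the uniform $L^q(\partial\Om)$-bound with a.e.\ convergence, lower semicontinuity of $\int_{\partial\Om}|\cdot|^q\ds$, while $\int_\Om fu_{\alpha_n}\dx\to\int_\Om f\bar u\dx$ by weak convergence. Since $\alpha_n\to\alpha_0>0$, this gives $J_{\alpha_0}(\bar u)\le\liminf_n J_{\alpha_n}(u_{\alpha_n})=\lim_n E_{\alpha_n}=E_{\alpha_0}$, where the last equality is the continuity of $E$ from \Cref{cor:lip}; hence $\bar u$ minimizes $J_{\alpha_0}$ and, by the uniqueness in \Cref{lemma:minimizer}, $\bar u=u_{\alpha_0}$. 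As the limit is independent of the subsequence, the whole sequence converges weakly.

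To upgrade to strong convergence, I would note that the three lower-semicontinuity inequalities above must hold with equality term by term. Indeed, writing $g_n=\frac1p\int_\Om|\nabla u_{\alpha_n}|^p\dx$ and $b_n=\frac{\alpha_n}{q}\int_{\partial\Om}|u_{\alpha_n}|^q\ds$, the convergence of the linear term and of $E_{\alpha_n}$ forces $g_n+b_n\to\frac1p\int_\Om|\nabla u_{\alpha_0}|^p\dx+\frac{\alpha_0}{q}\int_{\partial\Om}|u_{\alpha_0}|^q\ds$; a standard argument (if $\liminf g_n\ge A$, $\liminf b_n\ge B$ and $g_n+b_n\to A+B$, then $g_n\to A$ and $b_n\to B$) then gives separate convergence. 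From $\int_\Om|\nabla u_{\alpha_n}|^p\dx\to\int_\Om|\nabla u_{\alpha_0}|^p\dx$, the weak convergence $\nabla u_{\alpha_n}\weak\nabla u_{\alpha_0}$, and the uniform convexity of $L^p$ for $1<p<\infty$, I obtain strong convergence of the gradients, which together with strong $L^p(\Om)$-convergence yields $u_{\alpha_n}\to u_{\alpha_0}$ in $W^{1,p}(\Om)$. Since $\alpha_n\to\alpha_0>0$, the convergence of $b_n$ gives $\int_{\partial\Om}|u_{\alpha_n}|^q\ds\to\int_{\partial\Om}|u_{\alpha_0}|^q\ds$, which is the second assertion.

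Finally, for the derivative I would exploit that each $u_\alpha$ is admissible for every parameter. Comparing $u_\alpha$ and $u_\beta$ as competitors gives, for all $\alpha,\beta>0$,
$$E_\beta\le J_\beta(u_\alpha)=E_\alpha+\frac{\beta-\alpha}{q}\int_{\partial\Om}|u_\alpha|^q\ds,\qquad E_\alpha\le J_\alpha(u_\beta)=E_\beta+\frac{\alpha-\beta}{q}\int_{\partial\Om}|u_\beta|^q\ds.$$
For $\beta>\alpha$ these rearrange to
$$\frac1q\int_{\partial\Om}|u_\beta|^q\ds\le\frac{E_\beta-E_\alpha}{\beta-\alpha}\le\frac1q\int_{\partial\Om}|u_\alpha|^q\ds,$$
and letting $\beta\to\alpha^+$ the continuity of $\alpha\mapsto\int_{\partial\Om}|u_\alpha|^q\ds$ squeezes the right derivative to $\frac1q\int_{\partial\Om}|u_\alpha|^q\ds$; reading the same chain for $\beta<\alpha$ handles the left derivative. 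Equality of the one-sided derivatives gives differentiability with the claimed formula, and continuity of the right-hand side upgrades this to $C^1$. I expect the main obstacle to be precisely the passage from weak to strong convergence in $W^{1,p}(\Om)$: the nonlinear boundary term obstructs a direct identification through the Euler--Lagrange equation (which would otherwise require monotone-operator machinery), so it is essential to route the argument through the convergence of energies and the term-by-term splitting, after which uniform convexity completes the step.
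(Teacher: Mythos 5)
Your proposal is correct and follows essentially the same route as the paper: uniform bounds plus compactness and uniqueness identify the weak limit as $u_{\alpha_0}$, the energy-splitting argument (your $g_n,b_n$ are the paper's $X_k,Y_k$) upgrades to term-by-term convergence and hence strong $W^{1,p}(\Om)$ convergence via uniform convexity, and the derivative follows from the same competitor-swap squeeze combined with the continuity of $\alpha\mapsto\int_{\partial\Om}|u_\alpha|^q\ds$. The only cosmetic differences are that you obtain the uniform bound by testing against $u\equiv 0$ instead of invoking the local boundedness of $E_\alpha$ from \Cref{cor:lip}, and that you spell out the Radon--Riesz step which the paper leaves implicit.
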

\begin{proof}
	Let $\alpha>0$ be fixed and $\delta_k\to 0$ as $k\to\infty$, so that $\alpha+\delta_k>0$. Since $E_{\alpha+\delta_k}=J_{\alpha+\delta_k}(u_{\alpha+\delta_k})$ is bounded in $k$ (see \Cref{cor:lip}), by the coercivity property \Cref{lemma:poinc} we have that
	\begin{equation*}
		\norm{u_{\alpha+\delta_k}}_{W^{1,p}(\Om)}\leq C,\quad\text{for $k$ sufficiently large,}
	\end{equation*}
	for some $C>0$. In particular, by reflexivity, there exists $v\in W^{1,p}(\Om)$ such that, up to a subsequence
$$\begin{cases}
u_{\alpha+\delta_k}\weak v&\text{weakly in }W^{1,p}(\Om),\\
u_{\alpha+\delta_k}\to v&\text{strongly in }L^p(\Om)~\text{and in }L^p(\partial\Om),
\end{cases}$$
as $k\to\infty$. Moreover, by Fatou lemma, the fact that $f\in (W^{1,p}(\Om))^*$ and \Cref{cor:lip} we have that, up to a subsequence
	\begin{equation*}
		J_\alpha(v)\leq \liminf_{k\to\infty}J_{\alpha+\delta_k}(u_{\alpha+\delta_k})=\lim_{\delta\to 0}E_{\alpha+\delta}=E_\alpha,
	\end{equation*}
	which, by uniqueness of the minimizer, implies that $v=u_\alpha$. Now, being the limit uniquely determined, we have that $u_{\alpha+\delta}\weak u_\alpha$ weakly in $W^{1,p}(\Om)$ as $\delta\to 0$, without passing to subsequences. At this point, we prove that $u_{\alpha+\delta}\to u_\alpha$ strongly in $W^{1,p}(\Om)$ and in $L^q(\partial\Om)$. Let $\delta_k\to 0$ be an arbitrary vanishing sequence. We denote
	\begin{equation*}
		X_k:=\frac{1}{p}\int_\Om|\nabla u_{\alpha+\delta_k}|^p\dx,\qquad Y_k:=\frac{\alpha}{q}\int_{\partial\Om}|u_{\alpha+\delta_k}|^q\ds,\qquad Z_k:=\int_\Om f u_{\alpha+\delta_k}\dx
	\end{equation*}
	and
	\begin{equation*}
		X:=\frac{1}{p}\int_\Om|\nabla u_{\alpha}|^p\dx,\qquad Y:=\frac{\alpha}{q}\int_{\partial\Om}|u_{\alpha}|^q\ds,\qquad Z:=\int_\Om f u_{\alpha}\dx.
	\end{equation*}
	Since $Z_k\to Z$ and since $J_{\alpha+\delta_k}(u_{\alpha+\delta_k})\to J_\alpha(u_\alpha)$ as $k\to\infty$, we deduce that
	\begin{equation*}
		X_k+\left(1+\frac{\delta_k}{\alpha}\right)Y_k\to X+Y\qquad\text{as }k\to\infty.
	\end{equation*}	
	Moreover, one can observe that, for $k$ sufficiently large so that $|\delta_k|\leq \alpha/2$, we have
	\begin{equation*}
		0\leq \frac{Y_k}{2}\leq X_k+ \left(1+\frac{\delta_k}{\alpha}\right)Y_k\leq 2(X+Y),
	\end{equation*}
	and so we have
	\begin{equation}\label{eq:11}
		\lim_{k\to\infty}\left((X_k-X)+(Y_k-Y)\right)=0.
	\end{equation}
	We now claim that 
	\begin{equation}\label{eq:12}
		\lim_{k\to\infty}X_k=X\quad\text{and}\quad\lim_{k\to\infty}Y_k=Y.
	\end{equation}
	Being $\{X_k\}_k$ and $\{Y_k\}_k$ bounded, we have that there exists a subsequence $k_j\to\infty$ as $j\to\infty$, and two real numbers $X^*,Y^*$ such that
	\begin{equation*}
		\lim_{j\to\infty}X_{k_j}=X^*\quad\text{and}\quad\lim_{j\to\infty}Y_{k_j}=Y^*.
	\end{equation*}
	By weak lower semicontinuity of the $p$-norm of the gradient and Fatou lemma for the boundary term, we have that
	\begin{equation*}
		X\leq \liminf_{j\to\infty}X_{k_j}=X^*\quad\text{and}\quad Y\leq \liminf_{j\to\infty}Y_{k_j}=Y^*.
	\end{equation*}
	Let us now assume by contradiction that either $X^*>X$ or $Y^*>Y$. By \eqref{eq:11} we have that
	\[\begin{split}
		0&=\lim_{k\to\infty}\left((X_k-X)+(Y_k-Y)\right)\\
		&=\lim_{j\to\infty}\left((X_{k_j}-X)+(Y_{k_j}-Y)\right)\\
		&= X^*-X+Y^*-Y>0,
	\end{split}\]
	which leads to a contradiction. Hence, $X^*=X$ and $Y^*=Y$ and \eqref{eq:12} holds. This  concludes the proof of the first part.
	
	Let us now compute the first derivative of $E_\alpha$ with respect to $\alpha$. In order to do this, let us test the definition of $E_{\alpha+\delta}$ with $u_\alpha$ and the definition of $E_\alpha$ with $u_{\alpha+\delta}$. We get
	\begin{align*}
		&E_{\alpha+\delta}\leq \frac{1}{p}\int_\Om |\nabla u_\alpha|^p\dx+\frac{\alpha+\delta}{q}\int_{\partial\Om}|u_\alpha|^q\ds-\int_\Om fu_\alpha\dx \\
		&E_{\alpha}\leq \frac{1}{p}\int_\Om |\nabla u_{\alpha+\delta}|^p\dx+\frac{\alpha}{q}\int_{\partial\Om}|u_{\alpha+\delta}|^q\ds-\int_\Om fu_{\alpha+\delta}\dx.
	\end{align*}
	Therefore, we get the bounds
	\begin{equation*}
		\frac{\delta}{q}\int_{\partial\Om}|u_{\alpha+\delta}|^q\ds\leq E_{\alpha+\delta}-E_\alpha\leq \frac{\delta}{q}\int_{\partial\Om}|u_\alpha|^q\ds.
	\end{equation*}
	Now, in view of the continuity result of the first part, we conclude the proof of the second part.
\end{proof}

We conclude this section with the following remark, which will play a useful role in the proof of our main results.

\begin{remark}\label{rmk:g}
Consider the function, defined on $\R$, 
$$g(t):=\frac{a}{q}|t|^q-bt$$
where $a>0$ and $b\in\R$. Then, for every $t\in\R$, the function satisfies the inequality
$$g(t)\ge\min_{\R}g=\frac{1-q}{q}\,\frac{|b|^{q/(q-1)}}{a^{1/(q-1)}}.$$
\end{remark}

\section{The Dirichlet limit}\label{sec:dirichlet}

In the present section, we study the asymptotic behavior of the energy functional $E_\alpha$ as the parameter $\alpha\to+\infty$. In this regime, the functional converges to the classical Dirichlet energy, which we recall to be
$$E_\infty:=\inf\left\{\frac{1}{p}\int_\Om |\nabla u|^p\dx-\int_\Om fu\dx\ :\ u\in W^{1,p}_0(\Om)\right\},$$
achieved by a unique function $u_\infty\in W^{1,p}_0(\Om)$.  We are now ready to establish the   rate of convergence of $E_\alpha$ to $E_\infty$.

\begin{theorem}\label{thm:dir}
	Assume that $|\partial_{\nu} u_\infty|^{\frac{p-q}{q-1}}\partial_{\nu}u_\infty\in W^{1-1/p,p}(\partial\Om)$ and let $\bar{u}\in W^{1,p}(\Om)$ be an extension of $-|\partial_{\nu} u_\infty|^{\frac{p-q}{q-1}}\partial_{\nu}u_\infty$ to $\Om$. Then
	\begin{equation*}
		-\frac{q-1}{q}\int_{\partial\Om}|\partial_{\nu} u_\infty|^{\frac{(p-1)q}{q-1}}\ds\leq\alpha^\gamma(E_\alpha-E_\infty)\leq -\frac{q-1}{q}\int_{\partial\Om}|\partial_{\nu} u_\infty|^{\frac{(p-1)q}{q-1}}\ds+\rho_\alpha,
	\end{equation*}
	where
	\begin{equation*}
		\rho_\alpha:=\frac{\alpha^\gamma}{p}\int_\Om\Big(|\nabla (u_\infty+\alpha^{-\gamma}\bar{u})|^p-|\nabla u_\infty|^p-p\alpha^{-\gamma}|\nabla u_\infty|^{p-2}\nabla u_\infty\cdot\nabla \bar{u}\Big)\dx
	\end{equation*}
	satisfies $\rho_\alpha\to 0$ as $\alpha\to+\infty$ and $\gamma:=1/(q-1)$. Moreover, if $\partial_{\nu}u_\infty \in L^{\frac{(p-1)q}{q-1}}(\partial\Om)$, then
	\begin{equation*}
		E_\alpha=E_\infty-\frac{q-1}{q}\alpha^{-\gamma}\int_{\partial\Om}|\partial_\nu u_\infty|^{\frac{(p-1)q}{q-1}}\ds+o\big(\alpha^{-\gamma}\big),\qquad\text{as }\alpha\to+\infty.
	\end{equation*}
\end{theorem}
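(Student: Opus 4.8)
The plan is to prove the two displayed inequalities by a matching pair of lower and upper energy estimates, and then to let $\alpha\to+\infty$ to recover the asymptotic expansion. Throughout I would use two structural facts about $u_\infty$: since it vanishes on $\partial\Om$, its tangential gradient there is zero, so $|\nabla u_\infty|=|\partial_\nu u_\infty|$ on $\partial\Om$; and from $-\Delta_p u_\infty=f$ one has the Green identity $\int_\Om|\nabla u_\infty|^{p-2}\nabla u_\infty\cdot\nabla\varphi\dx=\int_\Om f\varphi\dx+\int_{\partial\Om}|\nabla u_\infty|^{p-2}(\partial_\nu u_\infty)\varphi\ds$ for every $\varphi\in W^{1,p}(\Om)$.

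For the lower bound, first I would exploit the convexity of $\xi\mapsto\frac1p|\xi|^p$ to get the pointwise inequality $\frac1p|\nabla u_\alpha|^p\ge\frac1p|\nabla u_\infty|^p+|\nabla u_\infty|^{p-2}\nabla u_\infty\cdot\nabla(u_\alpha-u_\infty)$. Integrating and applying the Green identity with $\varphi=u_\alpha-u_\infty$ — the interior source terms cancel against the $-\int_\Om f u_\alpha\dx$ in $E_\alpha$, and $u_\infty=0$ on $\partial\Om$ — bounds $E_\alpha-E_\infty$ from below by the single boundary integral $\int_{\partial\Om}\big(\frac{\alpha}{q}|u_\alpha|^q+|\nabla u_\infty|^{p-2}(\partial_\nu u_\infty)u_\alpha\big)\ds$. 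I would then invoke \Cref{rmk:g} pointwise in the trace variable $t=u_\alpha$, with $a=\alpha$ and $b=-|\nabla u_\infty|^{p-2}\partial_\nu u_\infty$; since $|b|^{q/(q-1)}=|\partial_\nu u_\infty|^{(p-1)q/(q-1)}$, integrating the resulting bound over $\partial\Om$ and multiplying by $\alpha^\gamma$ gives exactly the left inequality.

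For the upper bound I would test the definition of $E_\alpha$ with the competitor $w_\alpha:=u_\infty+\alpha^{-\gamma}\bar u$. On $\partial\Om$ one has $w_\alpha=-\alpha^{-\gamma}|\partial_\nu u_\infty|^{(p-q)/(q-1)}\partial_\nu u_\infty$, and an exponent count gives $|w_\alpha|^q=\alpha^{-\gamma q}|\partial_\nu u_\infty|^{(p-1)q/(q-1)}$, so that, using the identity $1-\gamma q=-\gamma$, the boundary term of $J_\alpha(w_\alpha)$ equals $\frac{\alpha^{-\gamma}}{q}\int_{\partial\Om}|\partial_\nu u_\infty|^{(p-1)q/(q-1)}\ds$. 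Expanding the gradient term by adding and subtracting $|\nabla u_\infty|^p$ and its first variation produces precisely $\frac1p\int_\Om|\nabla u_\infty|^p\dx+\alpha^{-\gamma}\int_\Om|\nabla u_\infty|^{p-2}\nabla u_\infty\cdot\nabla\bar u\dx+\alpha^{-\gamma}\rho_\alpha$; the cross term combines with $-\int_\Om f w_\alpha\dx$ through the Green identity (now with $\varphi=\bar u$) into the boundary integral $\int_{\partial\Om}|\nabla u_\infty|^{p-2}(\partial_\nu u_\infty)\bar u\ds=-\int_{\partial\Om}|\partial_\nu u_\infty|^{(p-1)q/(q-1)}\ds$. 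Collecting the $\alpha^{-\gamma}$-terms, the coefficients $-1+\frac1q=-\frac{q-1}{q}$ assemble into the claimed right inequality with remainder $\rho_\alpha$.

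The main technical point will be to show $\rho_\alpha\to0$. Writing $\e=\alpha^{-\gamma}\to0$, the integrand of $\rho_\alpha$ is $\frac{1}{p\e}$ times the first-order Taylor remainder of $\e\mapsto|\nabla u_\infty+\e\nabla\bar u|^p$ at $\e=0$, which is pointwise $o(\e)$ because $\xi\mapsto|\xi|^p$ is $C^1$; the issue is to transfer this to the integral. I would invoke the standard pointwise estimates for $|\xi|^p$: when $p\ge2$ the remainder is controlled by $C\big(|\nabla u_\infty|^{p-2}|\nabla\bar u|^2\e^2+|\nabla\bar u|^p\e^p\big)$, whence $|\rho_\alpha|\le C\e\int_\Om|\nabla u_\infty|^{p-2}|\nabla\bar u|^2\dx+C\e^{p-1}\int_\Om|\nabla\bar u|^p\dx$, the first integral being finite by Hölder with exponents $\frac{p}{p-2}$ and $\frac{p}{2}$ and the second because $\bar u\in W^{1,p}(\Om)$; when $1<p<2$ the cruder bound $C|\nabla\bar u|^p\e^p$ gives $|\rho_\alpha|\le C\e^{p-1}\int_\Om|\nabla\bar u|^p\dx$. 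In both cases $\rho_\alpha\to0$. Finally, sandwiching $\alpha^\gamma(E_\alpha-E_\infty)$ between the two estimates and using $\rho_\alpha\to0$ forces $\alpha^\gamma(E_\alpha-E_\infty)\to-\frac{q-1}{q}\int_{\partial\Om}|\partial_\nu u_\infty|^{(p-1)q/(q-1)}\ds$; under the extra hypothesis $\partial_\nu u_\infty\in L^{(p-1)q/(q-1)}(\partial\Om)$ this limit is finite, which is exactly the announced expansion. A secondary delicate point, which I would treat as granted from the standing hypotheses, is the validity of the boundary manipulations — that $\partial_\nu u_\infty$ is a genuine trace and that $|\nabla u_\infty|=|\partial_\nu u_\infty|$ holds on $\partial\Om$ — which hinges on the regularity of $u_\infty$ up to the boundary.
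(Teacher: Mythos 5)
Your two-sided bound is obtained exactly as in the paper: the same convexity lower bound (the paper first changes variables via $u\mapsto u_\infty+\alpha^{-\gamma}u$ and writes $E_\alpha=E_\infty+\alpha^{-\gamma}R_\alpha$, then discards the nonnegative bulk remainder and minimizes the boundary integrand pointwise via \Cref{rmk:g} — identical in substance to your argument evaluated at the minimizer $u_\alpha$), and the same competitor $u_\infty+\alpha^{-\gamma}\bar u$ for the upper bound, with the same exponent bookkeeping $1-\gamma q=-\gamma$. Two differences are worth recording. First, where the paper dismisses $\rho_\alpha\to 0$ as ``trivial,'' you actually prove it, via the standard second-order estimates for $\xi\mapsto|\xi|^p$ split into the cases $p\ge 2$ and $1<p<2$; your Hölder argument with exponents $\tfrac{p}{p-2}$ and $\tfrac{p}{2}$ is correct, and this is a genuine (if modest) addition. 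Second, and more substantively, your proof of the final expansion differs from the paper's: you deduce it by sandwiching between the two bounds of the first part, which requires \emph{both} the standing hypothesis (existence of the extension $\bar u$, i.e. $|\partial_\nu u_\infty|^{\frac{p-q}{q-1}}\partial_\nu u_\infty\in W^{1-1/p,p}(\partial\Om)$) and the integrability $\partial_\nu u_\infty\in L^{\frac{(p-1)q}{q-1}}(\partial\Om)$. The paper instead uses a density argument: it approximates $-|\partial_\nu u_\infty|^{\frac{p-q}{q-1}}\partial_\nu u_\infty$ in $L^q(\partial\Om)$ by traces $v_n\in W^{1-1/p,p}(\partial\Om)$, runs the upper-bound construction with their extensions $\bar v_n$, and passes to the limit first in $\alpha$ (for fixed $n$) and then in $n$. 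That argument never invokes $\bar u$, so it establishes the expansion under the integrability hypothesis alone, which is strictly more general; your sandwich suffices for the theorem as literally stated (where the extension hypothesis is standing), but not for the stand-alone version the paper's proof actually delivers. Finally, both proofs share the same unaddressed technical caveat, which you at least flag explicitly: the pointwise identities on $\partial\Om$ (that $|\nabla u_\infty|=|\partial_\nu u_\infty|$ there, and Green's identity with the boundary term read as a genuine integral rather than a duality pairing) presuppose enough boundary regularity of $u_\infty$.
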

\begin{proof}
	First of all, we prove that the following equivalent formulation for $E_\alpha$ holds true:
	\begin{equation}\label{eq:dir_1}
		E_\alpha=E_\infty+\alpha^{-\gamma}R_\alpha,
	\end{equation}
	where the remainder term $R_\alpha$ is given by
	\[\begin{split}
		R_\alpha:=\inf_{u\in W^{1,p}(\Om)}\bigg\{\frac{\alpha^\gamma}{p}\int_\Om\Big(|\nabla(u_\infty&+\alpha^{-\gamma}u)|^p-|\nabla u_\infty|^p\dx-p\alpha^{-\gamma}|\nabla u_\infty|^{p-2}\nabla u_\infty\cdot\nabla u\Big)\dx\\
		&+\int_{\partial\Om}\Big(\frac{1}{q}|u|^{q-2}u+|\nabla u_\infty|^{p-2}\partial_\nu u_\infty\Big)u\bigg\}.
	\end{split}\]
		We first perform the change of variables $u\mapsto u_\infty+\alpha^{-\gamma}u$, with $u\in W^{1,p}(\Om)$, and get that
	\begin{equation*}
		E_\alpha=E_\infty+\widetilde{R}_\alpha,
	\end{equation*}
	with
	\[\begin{split}
		\widetilde{R}_\alpha:=\inf_{u\in W^{1,p}(\Om)}\bigg\{\frac{1}{p}\int_\Om|\nabla(u_\infty+\alpha^{-\gamma}u)|^p\dx&-\frac{1}{p}\int_\Om|\nabla u_\infty|^p\dx\\
		&+\frac{\alpha^{1-\gamma q}}{q}\int_{\partial\Om}|u|^q\ds-\alpha^{-\gamma}\int_\Om fu\dx\bigg\}.
	\end{split}\]
	Then, by using the fact that $1-\gamma q=-\gamma$ and the integration-by-parts identity
	\begin{equation*}
		\int_\Om|\nabla u_\infty|^{p-2}\nabla u_\infty\cdot\nabla u\dx=\int_\Om fu\dx+\int_{\partial\Om}|\nabla u_\infty|^{p-2}\partial_\nu u_\infty u\ds,
	\end{equation*}
	we get that $\widetilde{R}_\alpha=\alpha^{-\gamma}R_\alpha$, which concludes the proof of \eqref{eq:dir_1}. In view of \eqref{eq:dir_1} we are left to prove that
	\begin{equation*}
		-\frac{q-1}{q}\int_{\partial\Om}|\partial_{\nu} u_\infty|^{\frac{(p-1)q}{q-1}}\ds\leq R_\alpha\leq -\frac{q-1}{q}\int_{\partial\Om}|\partial_{\nu} u_\infty|^{\frac{(p-1)q}{q-1}}\ds+\rho_\alpha.
	\end{equation*}
	In order to prove the lower bound, we simply observe that by convexity of the function $t\mapsto |t|^p$ there holds
	\begin{equation*}
		\frac{\alpha^\gamma}{p}\int_\Om|\nabla(u_\infty+\alpha^{-\gamma}u)|^p\dx-\frac{\alpha^\gamma}{p}\int_\Om|\nabla u_\infty|^p\dx \\
		-\int_\Om|\nabla u_\infty|^{p-2}\nabla u_\infty\cdot\nabla u\dx\geq 0
	\end{equation*}
	for all $u\in W^{1,p}(\Om)$ and all $\alpha>0$. Therefore, we have
	\begin{equation*}
		R_\alpha\geq \inf_{u\in W^{1,p}(\Om)}\left\{ \int_{\partial\Om}\left(\frac{1}{q}|u|^{q-2}u+|\partial_{\nu} u_\infty|^{p-2}\partial_{\nu} u_\infty\right)u\ds \right\}.
	\end{equation*}
	At this point, we optimize with respect to $u$ and find (see \Cref{rmk:g}) that the infimum is achieved with $u=\bar{u}$ on $\partial\Om$. Then, by explicit computations we conclude the proof of the lower bound.
	On the other hand, the upper is proved by testing the definition of $R_\alpha$ with $\bar{u}$, while the fact that $\rho_\alpha\to 0$ as $\alpha\to 0^+$ is trivial.
	
	Let us now pass to the proof of the second part. Since $|\partial_{\nu}u_\infty|^{\frac{p-q}{q-1}}\partial_{\nu}u_\infty \in L^q(\partial\Om)$, then there exists $v_n\in W^{1-1/p,p}(\partial\Omega)$ such that
	\begin{equation}\label{eq:dir_2}
		v_n\to -|\partial_{\nu}u_\infty|^{\frac{p-q}{q-1}}\partial_{\nu} u_\infty \quad\text{in }L^q(\partial\Omega)~\text{as }n\to\infty.
	\end{equation}
	Now, let $\bar{v}_n\in W^{1,p}(\Omega)$ be an extension of $v_n$ in $\Omega$. We can replicate the proof of the previous step by using $\bar{v}_n$ in place of $\bar{u}$ and obtain that
\[\begin{split}
-\frac{q-1}{q}\int_{\partial\Om}|\partial_\nu u_\infty|^{\frac{(p-1)q}{q-1}}\ds
&\le\alpha^\gamma(E_\alpha-E_\infty)\\
&\le\tilde{\rho}_{\alpha,n}+\int_{\partial\Om}\left(\frac{1}{q}|v_n|^q+|\partial_\nu u_\infty|^{p-2}\partial_\nu u_\infty v_n\right)\ds,
\end{split}\]
	where
	\begin{equation*}
		\tilde{\rho}_{\alpha,n}:=\frac{\alpha^\gamma}{p}\int_\Om\Big(|\nabla (u_\infty+\alpha^{-\gamma}\bar{v}_n)|^p-|\nabla u_\infty|^p-p\alpha^{-\gamma}|\nabla u_\infty|^{p-2}\nabla u_\infty\cdot\nabla \bar{v}_n\Big)\dx
	\end{equation*}
	satisfies $\tilde{\rho}_{\alpha,n}\to 0$ as $\alpha\to+\infty$. Now we pass to the limit as $\alpha\to+\infty$ and obtain that
	\begin{align*}
-\frac{q-1}{q}\int_{\partial\Om}|\partial_\nu u_\infty|^{\frac{(p-1)q}{q-1}}\ds
&\le\liminf_{\alpha\to+\infty}\alpha^\gamma(E_\alpha-E_\infty)\\
&\le\limsup_{\alpha\to+\infty}\alpha^\gamma(E_\alpha-E_\infty)\\
&\le\int_{\partial\Om}\left(\frac{1}{q}|v_n|^q+|\partial_\nu u_\infty|^{p-2}\partial_\nu u_\infty v_n\right)\ds.
	\end{align*}
Finally, we pass to the limit as $n\to\infty$ and, by the strong convergence \eqref{eq:dir_2}, we obtain
$$\lim_{\alpha\to+\infty}\alpha^\gamma(E_\alpha-E_\infty)=-\frac{q-1}{q}\int_{\partial\Om}|\partial_\nu u_\infty|^{\frac{(p-1)q}{q-1}}\ds,$$
as required.	
\end{proof}


\section{The Neumann limit}\label{sec:neumann}

In this section, we study of the asymptotic behaviour of $E_\alpha$ as $\alpha\to 0^+$. In contrast to the Dirichlet limit, here we distinguish the two cases
\begin{equation*}
	\int_\Om f\dx=0\qquad\text{and}\qquad\int_\Om f\dx\neq 0,
\end{equation*}
which determine whether the Neumann problem
\begin{equation*}
	\begin{cases}
		-\Delta_p u= f, &\text{in }\Om, \\
		|\nabla u|^{p-2}\partial_\nu u=0, &\text{on }\partial\Om,
	\end{cases}
\end{equation*}
has a solution or not.

\subsection{\texorpdfstring{The case $\int_\Om f\dx= 0$}{The zero mean case}}
Let us consider the minimization problem
\begin{equation*}
	E_0:=\inf_{u\in W^{1,p}(\Om)}\Bigg\{\frac{1}{p}\int_\Om|\nabla u|^p\dx-\int_\Om fu\dx\colon u\in W^{1,p}(\Om),~\int_{\partial\Om} |u|^{q-2}u\ds=0\Bigg\},
\end{equation*}
which is the natural limit problem for $E_\alpha$, as $\alpha\to 0^+$. Indeed, by testing the weak formulation of the equation of $u_\alpha$ with a constant test function, we obtain the condition
\begin{equation*}
	\int_{\partial\Om}|u_\alpha|^{q-2}u_\alpha\ds=\frac{1}{\alpha}\int_\Om f\dx=0.
\end{equation*}
Hence, we denote by $u_0\in W^{1,p}(\Omega)$ the unique minimizer of $E_0$, which exists in view of the validity of the compatibility condition
\begin{equation*}
	\int_\Om f\dx=0.
\end{equation*}
Moreover, $u_0$ weakly satisfies
$$\begin{cases}
-\Delta_p u_0=f&\text{in }\Om\\
|\nabla u_0|^{p-2}\partial_\nu u_0=0&\text{on }\partial\Om\\
\int_{\partial\Om}|u_0|^{q-2}u_0\ds=0,
\end{cases}$$
that is
\begin{equation*}
	\int_\Om|\nabla u_0|^{p-2}\nabla u_0\cdot\nabla \varphi\dx=\int_\Om f\varphi\dx\qquad\text{for all }\varphi\in W^{1,p}(\Om).
\end{equation*}
We now prove the main result concerning the asymptotic behavior of $E_\alpha$ in the case $\int_\Om f\dx=0$, that is \eqref{eq:neumann_th1}.
\begin{theorem}
	Assume that $u_0\in L^q(\partial\Om)$. Then
	\begin{equation*}
		E_\alpha=E_0+ \frac{\alpha}{q}\int_{\partial\Om}|u_0|^q\ds+o(\alpha),\quad\text{as }\alpha\to 0^+.
	\end{equation*}
\end{theorem}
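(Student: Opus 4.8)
The plan is to prove the expansion by squeezing $\alpha^{-1}(E_\alpha - E_0)$ between two quantities that both converge to $\frac{1}{q}\int_{\partial\Om}|u_0|^q\ds$ as $\alpha\to 0^+$. The upper bound is the easy direction: I would simply test the definition of $E_\alpha$ with the Neumann minimizer $u_0$ itself. Since $u_0\in W^{1,p}(\Om)$ and, by the standing assumption, $u_0\in L^q(\partial\Om)$, this is an admissible competitor, and it yields directly
\begin{equation*}
E_\alpha \le \frac{1}{p}\int_\Om|\nabla u_0|^p\dx + \frac{\alpha}{q}\int_{\partial\Om}|u_0|^q\ds - \int_\Om f u_0\dx = E_0 + \frac{\alpha}{q}\int_{\partial\Om}|u_0|^q\ds,
\end{equation*}
where I used that $u_0$ minimizes the constrained Neumann energy. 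This already gives $E_\alpha - E_0 \le \frac{\alpha}{q}\int_{\partial\Om}|u_0|^q\ds$ exactly, with no error term, so the $o(\alpha)$ must come entirely from the lower bound.

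For the lower bound I would work with the actual minimizer $u_\alpha$. The natural idea is to exploit the convexity inequality for $t\mapsto|t|^p$ in the same spirit as in \Cref{thm:dir}: writing $E_\alpha = J_\alpha(u_\alpha)$ and comparing the gradient term of $u_\alpha$ against that of $u_0$ via
\begin{equation*}
\frac{1}{p}\int_\Om|\nabla u_\alpha|^p\dx \ge \frac{1}{p}\int_\Om|\nabla u_0|^p\dx + \int_\Om|\nabla u_0|^{p-2}\nabla u_0\cdot\nabla(u_\alpha-u_0)\dx.
\end{equation*}
Using the weak Neumann equation for $u_0$, the cross term equals $\int_\Om f(u_\alpha-u_0)\dx$, and after rearranging, the linear source contributions combine to leave
\begin{equation*}
E_\alpha = J_\alpha(u_\alpha) \ge E_0 + \frac{\alpha}{q}\int_{\partial\Om}|u_\alpha|^q\ds.
\end{equation*}
This reduces the problem to showing $\int_{\partial\Om}|u_\alpha|^q\ds \to \int_{\partial\Om}|u_0|^q\ds$, which by \Cref{lemma:diff} (continuity of $\alpha\mapsto\int_{\partial\Om}|u_\alpha|^q\ds$) would follow once I establish that $u_\alpha \to u_0$ in the appropriate sense as $\alpha\to 0^+$, \emph{including} convergence of the boundary traces in $L^q$.

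\textbf{The main obstacle} is precisely this boundary convergence at the singular limit $\alpha\to 0^+$. The continuity statement in \Cref{lemma:diff} is proved on the \emph{open} interval $(0,+\infty)$ and does not a priori extend to the endpoint $\alpha=0$, where the limiting problem $E_0$ carries the extra normalization constraint $\int_{\partial\Om}|u|^{q-2}u\ds=0$. I would therefore need to argue separately that the family $\{u_\alpha\}$ is bounded in $W^{1,p}(\Om)$ as $\alpha\to 0^+$ (using coercivity from \Cref{lemma:poinc} together with the upper bound $E_\alpha\le E_0+O(\alpha)$ just obtained), extract a weak limit, identify it as $u_0$ by showing the limit satisfies the Neumann equation and the constraint $\int_{\partial\Om}|u_\alpha|^{q-2}u_\alpha\ds=\frac{1}{\alpha}\int_\Om f\dx=0$ (which holds for every $\alpha$ by testing with constants), and then upgrade to strong convergence of the boundary term. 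The delicate point is controlling $\int_{\partial\Om}|u_\alpha|^q\ds$ uniformly: the coefficient $\alpha/q$ in front of it degenerates, so the energy bound only gives $\alpha\int_{\partial\Om}|u_\alpha|^q\ds = O(\alpha)$, i.e. $\int_{\partial\Om}|u_\alpha|^q\ds=O(1)$, which is enough for boundedness but requires the convexity-based lower bound above to be combined with the matching upper bound to force $\limsup_{\alpha\to 0^+}\int_{\partial\Om}|u_\alpha|^q\ds \le \int_{\partial\Om}|u_0|^q\ds$; together with weak lower semicontinuity (Fatou on the trace) this pins down the limit and closes the squeeze.
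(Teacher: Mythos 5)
Your proposal is correct and follows essentially the same route as the paper: the same squeeze $\frac{\alpha}{q}\int_{\partial\Om}|u_\alpha|^q\ds \le E_\alpha-E_0 \le \frac{\alpha}{q}\int_{\partial\Om}|u_0|^q\ds$, uniform boundedness of $\{u_\alpha\}$ via \Cref{lemma:poinc}, identification of the weak limit as $u_0$ through the equation and the constraint $\int_{\partial\Om}|u_\alpha|^{q-2}u_\alpha\ds=\tfrac1\alpha\int_\Om f\dx=0$, and conclusion by weak lower semicontinuity of the boundary $L^q$-norm. The only (immaterial) difference is that you derive the lower bound of the squeeze from the convexity inequality together with the weak Neumann equation for $u_0$, whereas the paper obtains it by testing the constrained problem $E_0$ directly with $u_\alpha$, which is admissible precisely because of that constraint identity; both yield the same inequality.
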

\begin{proof}
	By reasoning as in the proof of the second part of \Cref{lemma:diff}, we get that
	\begin{equation}\label{eq:neum_1}
		\frac{1}{q}\int_{\partial\Omega}|u_\alpha|^q\ds\leq \frac{E_\alpha-E_0}{\alpha}\leq \frac{1}{q}\int_{\partial\Omega}|u_0|^q\ds
	\end{equation}
	for all $\alpha>0$. Hence, by weak lower semicontinuity of the $L^q(\partial\Omega)$-norm, the theorem is proved once we prove that
	\begin{equation*}
		u_\alpha\weak  u_0\quad\text{weakly in }L^q(\partial\Omega)~\text{as }\alpha\to 0^+.
	\end{equation*}
	By definition of $E_\alpha$ and in view of \eqref{eq:neum_1} we derive that
	\begin{equation*}
		\frac{1}{p}\int_\Omega|\nabla u_\alpha|^p\dx+\frac{\alpha}{q}\int_{\partial\Omega}|u_\alpha|^q\ds=E_\alpha+\int_\Omega fu_\alpha\dx\leq E_0+\frac{\alpha}{q}\int_{\partial\Omega}|u_0|^q\ds+\int_\Omega fu_\alpha\dx.
	\end{equation*}
	In particular,
	\begin{equation}\label{eq:neum_2}
		\norm{\nabla u_\alpha}_{L^p(\Omega)}^p\leq C+\norm{f}_{(W^{1,p}(\Omega))^*}\norm{u_\alpha}_{W^{1,p}(\Omega)},
	\end{equation}
	for some $C>0$ not depending on $\alpha$ and for $\alpha$ sufficiently small.
	On the other hand, thanks to \Cref{lemma:poinc} and \eqref{eq:neum_1} we get that, for $\alpha$ sufficiently small
	\begin{align*}
		\norm{u_\alpha}_{L^p(\Omega)} &\leq C\left(\norm{\nabla u_\alpha}_{L^p(\Omega)}+\norm{u_\alpha}_{L^q(\partial\Omega)}\right) \\
		&\leq C\left(\norm{\nabla u_\alpha}_{L^p(\Omega)}+\norm{u_0}_{L^q(\partial\Omega)}\right) .
	\end{align*}
	At this point, putting together this last estimate with \eqref{eq:neum_2} and \eqref{eq:neum_1} provides that $\{u_\alpha\}_{\alpha}$ is bounded in $W^{1,p}(\Omega)\cap L^q(\partial\Omega)$ (i.e. in $\mathcal{W}_{p,q}$), as $\alpha\to 0^+$. Therefore, up to a subsequence
	\begin{align*}
		&u_\alpha\weak U \quad\text{weakly in }W^{1,p}(\Omega), \\
		&u_\alpha\weak U \quad\text{weakly in }L^q(\partial \Omega), \\
		&u_\alpha\to U \quad\text{strongly in }L^p(\Omega)\cap L^p(\partial\Omega), 
	\end{align*}
	as $\alpha\to 0$, for some $U\in W^{1,p}(\Omega)$. From the equation satisfied by $u_\alpha$, since 
	\begin{equation*}
		|\nabla u_\alpha|^{p-2}\nabla u_\alpha\weak |\nabla U|^{p-2}\nabla U\quad\text{weakly in }L^{p'}(\Omega,\R^d)
	\end{equation*}
	 as $\alpha\to 0$, we obtain that
	\begin{equation*}
		\int_\Omega |\nabla U|^{p-2}\nabla U\cdot\nabla\varphi\dx=\int_\Omega f\varphi\dx\quad\text{for all }\varphi \in W^{1,p}(\Omega).
	\end{equation*}
	Analogously, since $u_\alpha \weak U$ in $L^q(\partial\Omega)$ as $\alpha\to 0$ we get that
	\begin{equation*}
		|u_\alpha|^{q-2}u_\alpha\weak |U|^{q-2}U\quad\text{weakly in }L^{q'}(\partial\Omega)
	\end{equation*}
	as $\alpha\to 0$, thus implying that
	\begin{equation*}
		0=\lim_{\alpha\to 0}\int_{\partial\Omega}|u_\alpha|^{q-2}u_\alpha\ds=\int_{\partial\Omega}|U|^{q-2}U\ds.
	\end{equation*}
	In particular, by uniqueness of the minimizer, we have that $U=u_0$. Then, since the limit $U$ is uniquely determined, we have that 
	\begin{equation*}
		u_\alpha\weak u_0\quad\text{weakly in }L^q(\partial\Omega)~\text{as }\alpha\to 0,
	\end{equation*}
	without passing to subsequences. By \eqref{eq:neum_1} and weak lower semicontinuity of the $L^q(\partial\Omega)$-norm, we conclude the proof.
\end{proof}

%

\subsection{\texorpdfstring{The case $\int_\Om f\dx\neq 0$}{The non-zero mean case}}

In this case, the expected limit Neumann problem does not have a solution and, as observed in the introduction, the Robin energy $E_\alpha$ diverges to $-\infty$ as $\alpha\to 0^+$. In the next result, we detect its   asymptotic behavior, thus proving \eqref{eq:neumann_th2}.

\begin{theorem}
	There holds
	\begin{equation*}
		-\frac{q-1}{q}\frac{\displaystyle \left|\int_\Om f\dx\right|^{\frac{q}{q-1}}}{\mathcal{H}^{d-1}(\partial\Om)^{\frac{1}{q-1}}}+\alpha^{\gamma}K_f\leq \alpha^\gamma E_\alpha\leq -\frac{q-1}{q}\frac{\displaystyle \left|\int_\Om f\dx\right|^{\frac{q}{q-1}}}{\mathcal{H}^{d-1}(\partial\Om)^{\frac{1}{q-1}}},
	\end{equation*}
	where $\gamma:=1/(q-1)$ and
	\begin{equation*}
		K_f:=\inf\left\{\frac{1}{p}\int_\Om|\nabla v|^p\dx-\int_\Om fv\dx\ :\ v\in W^{1,p}(\Om),~\int_{\partial\Om}v\ds=0\right\}.
	\end{equation*}
	In particular,
	\begin{equation*}
		E_\alpha=-\frac{1}{\alpha^\gamma}\frac{q-1}{q}\frac{\displaystyle \left|\int_\Om f\dx\right|^{\frac{q}{q-1}}}{\mathcal{H}^{d-1}(\partial\Om)^{\frac{1}{q-1}}}+o\left(\frac{1}{\alpha^\gamma}\right),\quad\text{as }\alpha\to 0^+.
	\end{equation*}
\end{theorem}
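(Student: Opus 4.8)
The plan is to establish the two displayed inequalities separately and then squeeze, exactly in the spirit of the Dirichlet case. After isolating the constant part of a competitor, both bounds will reduce to the elementary one-variable minimization recorded in \Cref{rmk:g}, with $a=\alpha\,\mathcal{H}^{d-1}(\partial\Om)$ and $b=\int_\Om f\dx$.

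For the \emph{upper bound} I would simply test the definition of $E_\alpha$ with constant functions $u\equiv c$. For such $u$ the gradient term vanishes, so
\[
J_\alpha(c)=\frac{\alpha\,\mathcal{H}^{d-1}(\partial\Om)}{q}|c|^q-c\int_\Om f\dx=g(c)
\]
in the notation of \Cref{rmk:g}. Taking the infimum over $c\in\R$ and invoking that remark yields
\[
E_\alpha\le\min_\R g=-\frac{q-1}{q}\,\frac{\big|\int_\Om f\dx\big|^{q/(q-1)}}{\big(\alpha\,\mathcal{H}^{d-1}(\partial\Om)\big)^{1/(q-1)}},
\]
which, multiplied by $\alpha^\gamma$ (recall $\gamma=1/(q-1)$, so $\alpha^\gamma\alpha^{-1/(q-1)}=1$), is exactly the claimed upper bound.

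For the \emph{lower bound} the idea is to split every $u\in W^{1,p}(\Om)$ as $u=\bar u+v$, where $\bar u:=\mathcal{H}^{d-1}(\partial\Om)^{-1}\int_{\partial\Om}u\ds$ is its boundary average and $v:=u-\bar u$ satisfies $\int_{\partial\Om}v\ds=0$. Since $\nabla u=\nabla v$ and $\int_\Om fu\dx=\int_\Om fv\dx+\bar u\int_\Om f\dx$, while Jensen's inequality for the convex map $t\mapsto|t|^q$ gives $\int_{\partial\Om}|u|^q\ds\ge\mathcal{H}^{d-1}(\partial\Om)\,|\bar u|^q$, I would bound
\[
J_\alpha(u)\ge\Big(\tfrac1p\int_\Om|\nabla v|^p\dx-\int_\Om fv\dx\Big)+\Big(\tfrac{\alpha\,\mathcal{H}^{d-1}(\partial\Om)}{q}|\bar u|^q-\bar u\int_\Om f\dx\Big).
\]
The first bracket is $\ge K_f$ by the very definition of $K_f$, since $v$ is admissible there; the second is again $g(\bar u)\ge\min_\R g$. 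Taking the infimum over $u$ gives $E_\alpha\ge K_f+\min_\R g$, i.e.\ the stated lower bound after multiplication by $\alpha^\gamma$.

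To deduce the $o(\alpha^{-\gamma})$ expansion I only need $K_f$ to be a finite real number: then $\alpha^\gamma K_f\to0$ as $\alpha\to0^+$ (because $\gamma>0$), and the two bounds pin down $\alpha^\gamma E_\alpha$ to the asserted limit. The one genuinely technical point—and the step I expect to require the most care—is precisely the boundedness from below of $K_f$, which is not furnished directly by \Cref{lemma:poinc}. I would obtain it from a Poincaré–Wirtinger inequality $\|v\|_{W^{1,p}(\Om)}\le C\|\nabla v\|_{L^p(\Om)}$ valid on the subspace $\{v:\int_{\partial\Om}v\ds=0\}$, proved by the same contradiction/compactness argument as in \Cref{lemma:poinc} (the limit being a constant forced to vanish by the zero boundary-mean constraint); this makes $v\mapsto\frac1p\int_\Om|\nabla v|^p\dx-\int_\Om fv\dx$ coercive, hence bounded below, on that subspace.
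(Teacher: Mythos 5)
Your proof is correct and takes essentially the same route as the paper: constant test functions together with \Cref{rmk:g} for the upper bound, and the decomposition of $u$ into its boundary average $\bar u$ plus a zero-boundary-mean part $v$ for the lower bound, where your use of Jensen's inequality is equivalent to the paper's pointwise convexity inequality $|t+v|^q\ge|t|^q+q|t|^{q-2}tv$ integrated over $\partial\Om$. The only (welcome) addition is that you actually prove $K_f>-\infty$ via a Poincar\'e--Wirtinger inequality on the subspace $\{v:\int_{\partial\Om}v\ds=0\}$, a point the paper dismisses with an appeal to ``classical variational methods.''
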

\begin{proof}
	The upper bound can be easily obtained by testing $E_\alpha$ with a constant $u\equiv t\in\R$. Indeed, we get $E_\alpha\leq g(t)$, with
	\begin{equation*}
		g(t):=\frac{\alpha\mathcal{H}^{d-1}(\partial\Om)}{q}|t|^q-t\int_\Om f\dx,
	\end{equation*}
	and so, by \Cref{rmk:g} we deduce the upper bound.
	
	For what concerns the upper bound, we let
	\begin{equation*}
		v_u:=u-\intn_{\partial\Om} u\ds,
	\end{equation*}
	so that $u=t_u+v_u$ with
	\begin{equation*}
		t_u:=\intn_{\partial\Om} u\ds. 
	\end{equation*}
	With this change, thanks to the convexity inequality
	\begin{equation*}
		|t+v|^q\geq |t|^q+q|t|^{q-2}tv
	\end{equation*}
	and the fact that
	\begin{equation*}
		\int_{\partial\Om}v_u\ds=0,
	\end{equation*}
	we have that
	\begin{align}
		E_\alpha&=\inf\left\{\frac{1}{p}\int_\Om|\nabla v_u|^p\dx+\frac{\alpha}{q}\int_{\partial\Om}|t_u+v_u|^q\ds-\int_\Om f v_u\dx-t_u\int_\Om f\dx\colon u\in W^{1,p}(\Om)\right\}\notag \\
		&\geq \inf\Bigg\{\frac{1}{p}\int_\Om|\nabla v_u|^p\dx+\frac{\alpha}{q}\int_{\partial\Om}|t_u|^q\ds+\alpha |t_u|^{q-2}t_u\int_{\partial\Om}v_u\ds \notag\\
		&\qquad\quad-\int_\Om f v_u\dx-t_u\int_\Om f\dx\colon u\in W^{1,p}(\Om)\Bigg\}\notag \\
		&\geq \inf\left\{\frac{1}{p}\int_\Om|\nabla v|^p\dx-\int_\Om f v\colon v\in W^{1,p}(\Om),~\int_{\partial\Om}v\ds=0\right\} \notag\\
		&+\inf\left\{\frac{\alpha |t|^q}{q}\mathcal{H}^{d-1}(\partial\Om)-t\int_\Om f\dx\colon t\in\R\right\}.\label{eq:neumann_f_not_0_1}
	\end{align}	
	Now, by classical variational methods, one can easily see that the first term
	\begin{equation*}
		K_f:=\inf\left\{\frac{1}{p}\int_\Om|\nabla v|^p\dx-\int_\Om f v\colon v\in W^{1,p}(\Om),~\int_{\partial\Om}v\ds=0\right\}>-\infty
	\end{equation*}
	is achieved by a unique function $v\in W^{1,p}(\Om)$ satisfying
	\begin{equation*}
		\begin{cases}
			-\Delta_p v=f, &\text{in }\Om, \\
			\displaystyle|\nabla v|^{p-2}\partial_\nu v =-\frac{1}{\mathcal{H}^{d-1}(\partial\Om)}\int_\Om f\dx, &\text{on }\partial\Om, \\
			\displaystyle \int_{\partial\Om} v\ds=0. &
		\end{cases}
	\end{equation*}
	Finally, in view of \Cref{rmk:g}, we can explicitly write the second term \eqref{eq:neumann_f_not_0_1} and conclude the proof.
\end{proof}

\section*{Acknowledgments}
The work of G. Buttazzo is part of the project 2017TEXA3H {\it``Gradient flows, Optimal Transport and Metric Measure Structures''} funded by the Italian Ministry of Research and University. 

R. Ognibene was partially supported by the European Research Council (ERC), through the European Union’s Horizon 2020 project ERC VAREG - Variational approach to the regularity of the free boundaries (grant agreement No. 853404). Part of the work was carried out during a visit of R. Ognibene at the University of Pisa, which is gratefully acknowledged.

G. Buttazzo and R. Ognibene are members of the Gruppo Nazionale per l'Analisi Matematica, la Probabilit\`a e le loro Applicazioni (GNAMPA) of the Istituto Nazionale di Alta Matematica (INdAM).

\bibliographystyle{aomalpha}

\bibliography{biblio}

\end{document}